\let\proof\relax
\let\endproof\relax
\newtheorem{theorem}{Theorem}[section]
\newtheorem{lemma}{Lemma}[section]
\newtheorem{lemmaappendix}{Lemma}[subsection]
\newtheorem{assumption}{Assumption}[section]
\newtheorem{remark}{Remark}[section]
\newtheorem{proof}{Proof}[section]
\def\BibTeX{{\rm B\kern-.05em{\sc i\kern-.025em b}\kern-.08em
    T\kern-.1667em\lower.7ex\hbox{E}\kern-.125emX}}
\begin{document}

\title{\LARGE \bf Decentralized Feedback Optimization via Sensitivity Decoupling: \\ Stability and Sub-optimality
}
\newcommand{\obj}{\Tilde{\Phi}}
\newcommand{\gest}{\Tilde{\nabla}}
\newcommand{\tr}[1]{\text{tr}(#1)}
\newcommand{\E}[1]{\bb{E}[#1]}
\newcommand{\Ef}[2]{\bb{E}[#1|\mathcal{F}_{#2}]}
\newcommand{\N}{\frac{1}{n}\mathbbm{1}^T\mathbbm{1}}
\newcommand{\inff}[1]{#1_{\infty}}
\newcommand{\bb}[1]{\mathbb{#1}}
\newcommand{\norm}[1]{\|#1\|}
\def\proof{\noindent\hspace{1em}{\itshape Proof: }}
\def\endproof{\hspace*{\fill}~\QED\par\endtrivlist\unskip}

\newcommand{\hzy}[1]{\textcolor{blue}{#1}}

\author{Wenbin Wang, Zhiyu He, Giuseppe Belgioioso, Saverio Bolognani, and Florian D{\"o}rfler
\thanks{The authors are with the Automatic Control Laboratory, ETH Z{\"u}rich, Physikstrasse 3, 8092, Z{\"u}rich, Switzerland. Emails: wenbiwang@student.ethz.ch, \{zhiyhe, gbelgioioso, bsaverio, dorfler\}@ethz.ch. This work was supported by the SNSF via NCCR Automation (grant number 180545). Z.~He also acknowledges the support of the Max Planck ETH Center for Learning Systems.}
}

\maketitle

\begin{abstract}
Online feedback optimization is a controller design paradigm for optimizing the steady-state behavior of a dynamical system. It employs an optimization algorithm as a dynamic feedback controller and utilizes real-time measurements to bypass knowing exact plant dynamics and disturbances. Different from existing centralized settings, we present a fully decentralized feedback optimization controller for networked systems to lift the communication burden and improve scalability. We approximate the overall input-output sensitivity matrix through its diagonal elements, which capture local model information. For the closed-loop behavior, we characterize the stability and bound the sub-optimality due to decentralization. We prove that the proposed decentralized controller yields solutions that correspond to the Nash equilibria of a non-cooperative game.
Simulations for a voltage control problem on a direct current power grid corroborate the theoretical results.
\end{abstract}


\section{Introduction}
Many engineering automation tasks consist of optimizing the steady-state operation of dynamical systems. Typical examples include congestion control in communication networks \cite{peterson2007computer}, voltage regulation \cite{Lukasortmann2020}, and optimal power flow in power systems \cite{Emiliano2016}. The key is to select an appropriate control input to optimize an objective function that reflects the input-output performance. This pursuit, however, involves some critical obstacles. Real-world systems are often large-scale, networked, and complex, making it challenging to precisely model their behaviors. Furthermore, exogenous disturbances often appear as parameters in steady-state optimization problems. Since disturbances are difficult to measure in general, numerical optimization relying on an exact formulation can be prohibitive or require conservative approximations.

Emerging feedback optimization controllers \cite{Hauswirth2021} exhibit huge potential in regulating dynamical plants in an efficient manner. The central idea is to use optimization algorithms as feedback controllers to drive the plant to an optimal steady-state operating point. It utilizes real-time measurements to update control inputs without requiring the exact plant model and disturbances. This feature endows feedback optimization with the versatility to handle various scenarios.
Examples include driving the system towards the global minimizer of convex functions \cite{simpsonporco2020}, stationary points of non-convex functions \cite{Hauswirthadrian2021, Miguelpicallo2020}, and competitive equilibria (e.g., Nash or Wardrop) of noncooperative games \cite{Belgioiosogiuseppe2021, Belgioioso2022}. Guarantees on closed-loop stability, robustness, and constraint satisfaction are established in continuous-time \cite{ Hauswirthadrian2021}, discrete-time \cite{Haberleverena2021}, and sampled-data \cite{Belgioiosogiuseppe2021} scenarios.

The closed-loop interconnection between dynamical plants and feedback optimization controllers brings unique challenges. When the plants are rapidly pre-stabilized, they can be abstracted by their steady-state input-output maps\cite{Miguelpicallo2020, Haberleverena2021}. Such abstractions help to obtain closed-loop guarantees. However, for general dynamical plants \cite{He2022}, real-time output measurements may not align with steady-state responses. This mismatch may cause non-negligible errors in stability analysis. To address this issue, one can employ singular perturbation analysis to quantify the required timescale separation for a satisfactory feedback optimization controller\cite{Hauswirthadrian2021}. The above methods require explicit knowledge of the steady-state input-output sensitivity matrix. To remove this restriction, recursive sensitivity learning based on streaming real-time measurements is proposed\cite{picallo2022adaptive}. Moreover, zeroth-order optimization algorithms utilizing gradient estimation \cite{Zhang2020} or Gaussian Processes \cite{ospina2022learning} offer an alternative that circumvents the need for sensitivity information. It has proven effective in feedback optimization \cite{He2022} and network optimization \cite{Tang2019, Tang2020}.

Centralized approaches may encounter various issues (e.g., in scalability and privacy) when deployed in large-scale networked systems. In this regard, distributed implementations of feedback optimization have been studied \cite{carnevale2023nonconvex}. These methods require that agents exchange local information with their neighbors in a network. To regulate the trade-off between global performance and local coordination, clustering strategies \cite{CHANFREUT202175} explore different aspects (e.g., partitioning \cite{8814851} and plug-and-play \cite{7040312}) to efficiently control large-scale networked systems. However, the establishment of communication channels may incur significant engineering effort, restrictions of feasible regions, time-consuming data transmission, and the need for iterative collaborative updates.

To overcome these limitations, we pursue a
fully decentralized and communication-free approach. Each agent uses the local sensitivity information and updates its control input without any communication. From a network-level perspective, this corresponds to approximating the sensitivity matrix by its diagonal elements, thereby leading to fully decoupled updates. We first show that the stationary point of this controller coincides with the Nash equilibrium of an underlying convex game. Then, we conduct a comprehensive analysis of the closed-loop stability and sub-optimality, namely, the distance between the globally optimal point and the stationary point to which the proposed controller converges. We characterize the dependence of the sub-optimality on the dynamic coupling and the properties of the objective.

\section{Preliminaries and Problem Formulation}
\label{section 2}
\subsection{Notation}
Let $\bb{R}$ be the set of real numbers. We denote the inner product and the $l_2$-norm by $\left \langle \cdot,\cdot \right \rangle$ and $\norm{\cdot}$, respectively. For a differentiable function $\Phi:\bb{R}^n \times \bb{R}^n \rightarrow \bb{R}$, $\nabla_u \Phi(u,y)$ and $\nabla_y \Phi(u,y)$ denote the partial derivatives with respect to $u$ and $y$, respectively. For a square matrix $H$, we use $H_{\text{diag}}$ to represent the diagonal matrix with the diagonal elements of $H$. The largest singular value of $H$ is $\sigma_{\max}(H)$, and the smallest one is $\sigma_{\min}(H)$. We denote the realization of a vector $x \in \bb{R}^n$ at time $k$ by $x_k$, and its $i$-th entry by $x_{i,k}$.

\subsection{Problem Formulation}
In this paper, we focus on feedback optimization for a networked system with $N$ agents. The plant is represented by an asymptotically stable linear time-invariant (LTI) system
\begin{equation}
\label{LTI}
\begin{split}
    x_{k+1} &= Ax_k + Bu_k,\\
        y_k     &= Cx_k + Du_k + d,
\end{split}  
\end{equation}
where $x \in \bb{R}^N$ is the state, $u \in \bb{R}^N$ is the input, $y \in \bb{R}^N$ is the output, and $d\in \bb{R}^N$ is a constant unknown disturbance. Every element of $x$, $y$, and $u$ is the local state, output, and input of the corresponding agent. The plant \eqref{LTI} has a linear steady-state input-output map $y=Hu+d$, where $H=C(I-A)^{-1}B+D\in \bb{R}^{N\times N}$ is the sensitivity matrix. The scalar input and output of each agent are considered for notational convenience. In fact, the design and analysis can be easily extended to the multivariate setting with decoupled input constraint sets. An example of a system with three agents is given in Fig.~\ref{fig: problem formulation}. 

\begin{figure}[!t]
    \centering
    \includegraphics[width=0.5\textwidth]{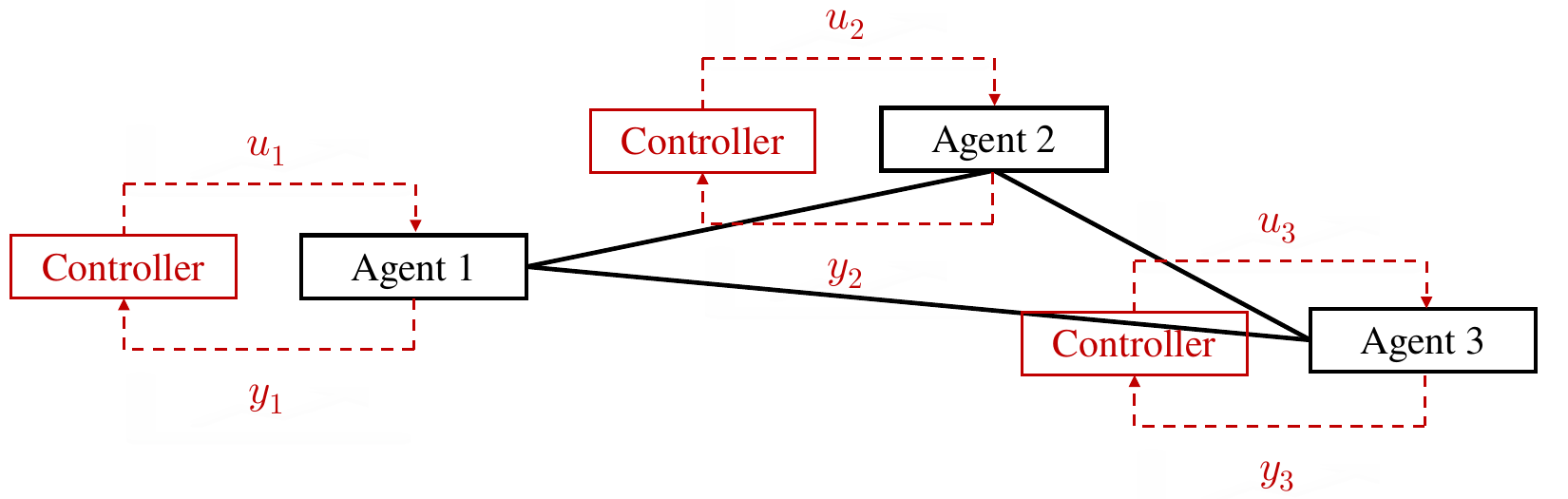}
    \caption{Example of a system with three agents. The black line indicates the coupling dynamics between agents. The red dashed line represents the decentralized update using local inputs and outputs.}
    \label{fig: problem formulation}
\end{figure}

We aim to drive the plant \eqref{LTI} to an optimal steady-state operating point defined by
\begin{subequations}
    \label{prob: introduction 2}
    \begin{align}
    \min_{u,y} \quad & \Phi(u,y)\\
    \textrm{s.t.} \quad & y=Hu + d, \label{eq:ss-map}
    \end{align}
\end{subequations}
where the global objective function $\Phi(u,y)=\sum_{i=1}^{N}\Phi_i(u_i,y_i)$ is the sum of all the local objectives $\Phi_i(u_i,y_i)$, and $u_i$ and $y_i$ are the input and output of agent $i$, respectively. By replacing $y$ in the objective, problem \eqref{prob: introduction 2} is transformed to the following unconstrained problem:
\begin{equation}
    \label{prob: introduction 3}
    \min_{u} \quad \obj(u),
\end{equation}
where $\obj(u)=\sum_{i=1}^{N}\obj_i(u)$, and $\tilde \Phi_i(u) = \Phi_i(u_i,y_i)$. We make the following assumptions about the objective function.
\begin{assumption}
\label{assumption: chapter 2}
The following conditions hold:
    \begin{enumerate}
        \item Agent $i$'s objective function $\Phi_i(u_i,y_i):\bb{R} \times  \bb{R}\rightarrow \bb{R}$ is separable with respect to $u_i$ and $y_i$, i.e., $\Phi_i(u_i,y_i)=\Phi_i^{(1)}(u_i) + \Phi_i^{(2)}(y_i)$;
        \item $\Phi_i^{(1)}(u_i):\bb{R} \rightarrow \bb{R}$ is continuously differentiable, $L_u$-smooth, and $m_u$-strongly convex;
        \item $\Phi_i^{(2)}(y_i):\bb{R} \rightarrow \bb{R}$ is continuously differentiable, $L_y$-smooth, and $m_y$-strongly convex.
    \end{enumerate}
\end{assumption}
The assumptions on smoothness and strong convexity are common in the literature of numerical optimization and control \cite{simpsonporco2020, hauswirth2020antiwindup}. Furthermore, the assumption on separable objective functions is largely satisfied in practical applications \cite{Emiliano2016}. It follows by Assumption \ref{assumption: chapter 2} that the global objective functions $\Phi^{(1)}(u)=\sum_{i = 1}^{N}\Phi_i^{(1)}(u_i)$ and $\Phi^{(2)}(y)=\sum_{i = 1}^{N}\Phi_i^{(2)}(y_i)$ are $NL_u$ and $NL_y$-smooth, respectively. The objective function $\obj(u)$ is $L$-smooth and $m$-strongly convex, where $L=NL_u+N\sigma_{\max}^2(H)L_y$ and $m=Nm_u+N\sigma_{\min}^2(H)m_y$, respectively.

\section{Design of the Decentralized Controller}
\label{section 3}
Existing centralized feedback optimization controllers exploit the following gradient-based iterative update\cite{Hauswirth2021}
\begin{equation}
\label{centralized update for agent}
    u_{k+1}=u_k-\eta \left(\nabla_u \Phi(u_k,y_k)+H^{\top}\nabla_y \Phi(u_k,y_k)\right),
\end{equation}
where $y_k$ is the real-time measurement, and $\eta>0$ is the step size. Note that the update in  \eqref{centralized update for agent} involves the full sensitivity matrix $H$, where its element $H_{ij}$ represents the sensitivity of agent $j$'s output to agent $i$'s input. In practice, such a coupling sensitivity requires subsystems to exchange a large amount of information at each sampling instant $k$. In scenarios where i) communication is hard to establish (due to cost, privacy, etc.), ii) the sampling period is too short for communication to occur, or iii) the cross-coupling terms defining how agents influence one another are unknown, the centralized controller \eqref{centralized update for agent} may not be implementable in practice. Instead, we propose a fully decentralized controller via an approximate and decoupled sensitivity matrix.

\subsection{Decentralized Controller}
To develop a decentralized counterpart of \eqref{centralized update for agent}, each agent updates its control input only based on its local sensitivity and measurement, i.e.,
\begin{equation*}
\begin{split}
    u_{i,k+1}=u_{i,k}-\eta\left(\nabla_u \Phi_i(u_{i,k},y_{i,k})+H_{ii}\nabla_y \Phi_i(u_{i,k},y_{i,k})\right),
\end{split}
\end{equation*}
where the local sensitivity information $H_{ii}$ and the local measurement $y_{i,k}$ are utilized to calculate $u_{i,k+1}$.
From a system perspective, the update is represented by 
\begin{equation}
\label{eq: decentralized algorithm system}
    u_{k+1}=u_k-\eta \left(\nabla_u \Phi(u_k,y_k)+ H_{\text{diag}}^{\top}\nabla_y \Phi(u_k,y_k)\right).
\end{equation}
By using the diagonal matrix $H_{\text{diag}}$ to approximate the sensitivity matrix $H$, this controller is fully decoupled over the network. Hence, each agent can perform local adjustments without any communication.

\subsection{A Game-Theoretic Interpretation}
Next, we analyze the stationary points $\inff{u}$ to which the decentralized controller \eqref{eq: decentralized algorithm system} converges. 
\begin{theorem}
\label{theorem: game theory}
    Given Assumption \ref{assumption: chapter 2}, the stationary points of the controller \eqref{eq: decentralized algorithm system}, if they exist, equal to the Nash equilibria of the following convex game:
    \begin{equation}
    \label{convex game}
    \begin{aligned}
    \forall i, \quad\min_{u_i} \quad & \obj_i(u).\\
    \end{aligned}
\end{equation}
\end{theorem}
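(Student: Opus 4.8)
The plan is to characterize both the set of stationary points of \eqref{eq: decentralized algorithm system} and the set of Nash equilibria of \eqref{convex game} through their first-order conditions, and then observe that these conditions coincide entrywise.

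First I would treat the controller side. A stationary point $\inff{u}$ has associated steady-state output $\inff{y}=H\inff{u}+d$, and setting $u_{k+1}=u_k=\inff{u}$ and $y_k=\inff{y}$ in \eqref{eq: decentralized algorithm system} yields $\nabla_u\Phi(\inff{u},\inff{y})+H_{\text{diag}}^{\top}\nabla_y\Phi(\inff{u},\inff{y})=0$; since $H_{\text{diag}}$ is diagonal, $H_{\text{diag}}^{\top}=H_{\text{diag}}$. Because $\Phi(u,y)=\sum_i\Phi_i(u_i,y_i)$ is separable, the $i$-th entry of $\nabla_u\Phi$ equals $\nabla_{u_i}\Phi_i(u_i,y_i)$ and likewise for $\nabla_y\Phi$, so the fixed-point condition decouples into the $N$ scalar equations
\[
\nabla_{u_i}\Phi_i(u_{i,\infty},y_{i,\infty})+H_{ii}\,\nabla_{y_i}\Phi_i(u_{i,\infty},y_{i,\infty})=0,\qquad i=1,\dots,N,
\]
with $y_{i,\infty}=(H\inff{u}+d)_i$.

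Next I would handle the game side. Since $\obj_i(u)=\Phi_i^{(1)}(u_i)+\Phi_i^{(2)}\big((Hu+d)_i\big)$ and $u_i\mapsto (Hu+d)_i$ is affine, the map $u_i\mapsto\obj_i(u_i,u_{-i})$ is the sum of the strongly convex $\Phi_i^{(1)}$ and a convex term (an affine reparametrization of the convex $\Phi_i^{(2)}$), hence continuously differentiable and strongly convex. Consequently $\inff{u}$ is a Nash equilibrium of \eqref{convex game} if and only if $\nabla_{u_i}\obj_i(\inff{u})=0$ for every $i$, i.e., each best-response condition reduces to vanishing of the corresponding own-variable gradient. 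The chain rule gives $\nabla_{u_i}\obj_i(u)=\nabla_{u_i}\Phi_i(u_i,y_i)+H_{ii}\,\nabla_{y_i}\Phi_i(u_i,y_i)$ evaluated at $y_i=(Hu+d)_i$, because $\partial y_i/\partial u_i=H_{ii}$. This is exactly the system displayed above, so the two sets coincide. Equivalently, one can note that the right-hand side of \eqref{eq: decentralized algorithm system} is precisely the game's pseudo-gradient evaluated along the steady-state manifold $y=Hu+d$, so its zeros are the Nash equilibria.

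The only step that needs genuine care is the ``if and only if'' in the Nash characterization: I must invoke convexity of each agent's subproblem so that stationarity is not merely necessary but also sufficient for a best response. This is where Assumption \ref{assumption: chapter 2} enters — convexity is preserved under the affine substitution $y_i=(Hu+d)_i$, and strong convexity of $\Phi_i^{(1)}$ makes each subproblem strictly convex with a unique minimizer determined by a zero gradient. (Strong monotonicity of the pseudo-gradient would moreover yield existence and uniqueness of the Nash equilibrium, but the statement only claims set equality, so this is not required here.) Apart from that, the argument is elementary index bookkeeping — expanding the gradients componentwise and matching terms — and I do not expect a substantive obstacle.
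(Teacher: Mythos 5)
Your proof is correct and follows essentially the same route as the paper: both reduce the fixed-point condition of \eqref{eq: decentralized algorithm system} at $\inff{y}=H\inff{u}+d$ to the vanishing of the game's pseudo-gradient $u \mapsto [\partial_{u_1}\obj_1(u),\dots,\partial_{u_N}\obj_N(u)]^{\top}$ and then identify its zeros with the Nash equilibria of \eqref{convex game}. The only difference is that the paper outsources the last step to \cite[Theorem 5]{facchinei2010generalized}, whereas you justify it directly from the convexity of each agent's subproblem under the affine substitution $y_i=(Hu+d)_i$ — a self-contained but equivalent argument.
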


\begin{proof}
    The $i$-th entry of the fixed point $\inff{u}$ of \eqref{eq: decentralized algorithm system} satisfies 
    \begin{equation*}
        \label{game map}
        \nabla_u \Phi_i(u_i,(H\inff{u})_i+d_i)+H_{ii}\nabla_y \Phi_i(u_i,(H\inff{u})_i+d_i)=0.
    \end{equation*}
    The system-level representation of the above update is
    \begin{equation*}
    \label{stationary condition: decentralized algo}
        \nabla_u \Phi(\inff{u},H\inff{u}+d)+ H_{\text{diag}}^{\top}\nabla_y \Phi(\inff{u},H\inff{u}+d)=0.
    \end{equation*}
    The left-hand side of the above equation is also the pseudo-gradient mapping, i.e., $u \mapsto [\partial_{u_1} \obj_1(u), \cdots, \partial_{u_N}\obj_N(u)]^{\top}$, of the game \eqref{convex game}, whose zeros are the Nash equilibria \cite[Theorem 5]{facchinei2010generalized}. Thus, the stationary points of \eqref{eq: decentralized algorithm system} are the Nash equilibria of the game \eqref{convex game}. 
\end{proof}
\section{Performance Analysis}
\label{section 4}
The update direction on the right-hand side of \eqref{eq: decentralized algorithm system} does not align with the gradient of the objective function \eqref{prob: introduction 3}. To study the stability and sub-optimality of the interconnection of the controller \eqref{eq: decentralized algorithm system} and the plant, we establish the condition of the plant such that the iteration of this decentralized controller is strongly monotone \cite[Def. 22.1]{Bauschke}. We develop the analysis for two separate cases, i.e., the plant is represented by the steady-state map \eqref{eq:ss-map} or by the LTI plant \eqref{LTI}. 

\subsection{Interconnection with an Algebraic Plant}
We first consider the case where the plant has fast-decaying dynamics given any fixed inputs. Then, the plant can be replaced by its linear steady-state map \eqref{eq:ss-map} \cite{Haberleverena2021}. The closed-loop system with the decentralized controller \eqref{eq: decentralized algorithm system} is
\begin{subequations}
\label{sys: wo dynamic}
\begin{align}
    \label{sys: wo dynamic a}
    y_k &= Hu_k + d,\\
    \label{sys: wo dynamic b}
    u_{k+1} &= u_k - \eta (\nabla \Phi^{(1)}(u_k) + H_{\text{diag}}^{\top}\nabla \Phi^{(2)}(y_k)).
\end{align}
\end{subequations}
Furthermore, the right-hand side of \eqref{sys: wo dynamic b} is strongly monotone when the plant \eqref{sys: wo dynamic a} is weakly coupled, see the characterization \eqref{weakly coupled} below.
\begin{lemma}
\label{lemma: unique stationary point}
    Given Assumption \ref{assumption: chapter 2}, we conclude that $\nabla \Phi^{(1)}(u) + H_{\text{diag}}^{\top}\nabla \Phi^{(2)}(Hu+d)$ is a $(m-c)$-strongly monotone operator if $m>c$, where $m=Nm_u+N\sigma_{\min}^2(H)m_y$ and $c=N\sigma_{\max}(H-H_{\text{diag}})\sigma_{\max}(H)L_y$.
\end{lemma}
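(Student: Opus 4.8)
The plan is to write the decentralized operator as the exact gradient $\nabla\obj$ plus a decoupling error and to show that the error degrades strong monotonicity by at most $c$. Set
\[
G(u) := \nabla\Phi^{(1)}(u) + H_{\text{diag}}^{\top}\nabla\Phi^{(2)}(Hu+d)
= \nabla\obj(u) + \bigl(H_{\text{diag}}-H\bigr)^{\top}\nabla\Phi^{(2)}(Hu+d),
\]
where I used that, under Assumption \ref{assumption: chapter 2}, $\nabla_u\Phi = \nabla\Phi^{(1)}$ and $\nabla_y\Phi = \nabla\Phi^{(2)}$, so that $\nabla\obj(u) = \nabla\Phi^{(1)}(u) + H^{\top}\nabla\Phi^{(2)}(Hu+d)$ by the chain rule. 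Since $\obj$ is $m$-strongly convex with $m = Nm_u + N\sigma_{\min}^2(H)m_y$ (as recorded in Section \ref{section 2}), its gradient $\nabla\obj$ is $m$-strongly monotone, i.e. $\langle \nabla\obj(u)-\nabla\obj(u'),\,u-u'\rangle \ge m\,\norm{u-u'}^2$ for all $u,u'\in\bb{R}^N$.

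Next I would estimate the error term $E(u):=(H_{\text{diag}}-H)^{\top}\nabla\Phi^{(2)}(Hu+d)$. By Cauchy--Schwarz, submultiplicativity of the spectral norm, and transpose-invariance $\sigma_{\max}((H-H_{\text{diag}})^{\top})=\sigma_{\max}(H-H_{\text{diag}})$,
\[
\langle E(u)-E(u'),\,u-u'\rangle
\ge -\,\sigma_{\max}(H-H_{\text{diag}})\,\bigl\|\nabla\Phi^{(2)}(Hu+d)-\nabla\Phi^{(2)}(Hu'+d)\bigr\|\,\norm{u-u'}.
\]
Applying the $NL_y$-smoothness of $\Phi^{(2)}$ together with $\norm{Hu-Hu'}\le\sigma_{\max}(H)\norm{u-u'}$ gives $\bigl\|\nabla\Phi^{(2)}(Hu+d)-\nabla\Phi^{(2)}(Hu'+d)\bigr\|\le NL_y\,\sigma_{\max}(H)\,\norm{u-u'}$, hence $\langle E(u)-E(u'),u-u'\rangle \ge -c\,\norm{u-u'}^2$ with $c = N\sigma_{\max}(H-H_{\text{diag}})\sigma_{\max}(H)L_y$. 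Summing the two bounds yields $\langle G(u)-G(u'),u-u'\rangle \ge (m-c)\norm{u-u'}^2$, i.e. $G$ is $(m-c)$-strongly monotone as soon as $m>c$.

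I do not anticipate a genuine obstacle; the argument is essentially one application of Cauchy--Schwarz on top of facts already established in Section \ref{section 2}. The points that need care are (i) handling the transpose correctly---the spectral norm is transpose-invariant, so nothing is lost in bounding $(H-H_{\text{diag}})^{\top}$; (ii) carrying the composition with the linear steady-state map $u\mapsto Hu+d$ through the Lipschitz estimate, which is precisely where the factor $\sigma_{\max}(H)$ enters $c$; and (iii) noting that the constant $m$ of $\nabla\obj$ already incorporates the output term via $\sigma_{\min}^2(H)m_y$, so no rank assumption on $H$ is required---if $H$ is singular that contribution merely vanishes and one still has $m\ge Nm_u>0$.
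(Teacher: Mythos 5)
Your proof is correct and follows essentially the same route as the paper: the paper's step of adding and subtracting $H(u_1-u_2)$ inside the inner product is exactly your decomposition of the operator into $\nabla\obj$ (whose $m$-strong monotonicity supplies $Nm_u+N\sigma_{\min}^2(H)m_y$) plus the decoupling error $(H_{\text{diag}}-H)^{\top}\nabla\Phi^{(2)}(Hu+d)$, which both arguments bound by $-c\norm{u-u'}^2$ via Cauchy--Schwarz and the $NL_y$-Lipschitz gradient of $\Phi^{(2)}$.
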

\begin{proof}
The proof can be found in Appendix \ref{Proof for lemma}.
\end{proof}

Note that the condition $m>c$ is equivalent to
\begin{equation}
\label{weakly coupled}
    \sigma_{\max}(H-H_{\text{diag}})\leq\frac{m_u+\sigma_{\min}^2(H)m_y}{\sigma_{\max}(H)L_y}.
\end{equation}
A diagonally dominant sensitivity matrix $H$ can help to satisfy this condition.

If the plant satisfies \eqref{weakly coupled}, the stationary point $\inff{u}$ of the closed-loop interconnection \eqref{sys: wo dynamic} 
exists and is unique \cite[Corollary 23.37]{Bauschke}. Let $u^*$ be the globally optimal point of \eqref{prob: introduction 2} and $y^* = Hu^* + d$ be the corresponding steady-state output. The following theorem shows that the system \eqref{sys: wo dynamic} converges to a neighborhood of $u^*$ with a linear rate. 

\begin{theorem}
\label{theorem: no plant dynamic strongly convex}
    Let Assumption 2.1 hold, and let the plant \eqref{sys: wo dynamic a} satisfy the diagonal dominance \eqref{weakly coupled}. For all $\eta \in (0,\frac{2m-2c}{L^2-m^2})$, the closed-loop system \eqref{sys: wo dynamic} satisfies
    \begin{equation}
    \label{eq: stability result 1}
    \|u_{k}\!-\!u^*\| \leq \rho^k\|u_0\!-\!u^*\|\!+\!\eta\|(H^{\top}\!-\!H_{\text{diag}})\nabla \Phi^{(2)}(y^*)\|\sum_{j=0}^{k-1}\rho^j,
    \end{equation}
where $\rho=\sqrt{1-2m\eta +L^2\eta^2}+c\eta$, $0<\rho<1$, and $m$ and $c$ are given in Lemma~\ref{lemma: unique stationary point}.
\end{theorem}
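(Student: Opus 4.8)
The plan is to read the closed-loop iteration \eqref{sys: wo dynamic} as a \emph{biased} gradient step on the reduced objective $\obj$ and to track the bias. Substituting \eqref{sys: wo dynamic a} into \eqref{sys: wo dynamic b}, using $\nabla\obj(u)=\nabla\Phi^{(1)}(u)+H^{\top}\nabla\Phi^{(2)}(Hu+d)$ and $H_{\text{diag}}^{\top}=H_{\text{diag}}$, one obtains
\begin{equation*}
u_{k+1}=u_k-\eta\nabla\obj(u_k)+\eta(H^{\top}-H_{\text{diag}})\nabla\Phi^{(2)}(Hu_k+d).
\end{equation*}
Since $u^*$ minimizes the strongly convex $\obj$, we have $\nabla\obj(u^*)=0$ and $y^*=Hu^*+d$. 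Subtracting $u^*$ and inserting $\pm\,\eta(H^{\top}-H_{\text{diag}})\nabla\Phi^{(2)}(y^*)$ writes $u_{k+1}-u^*$ as the sum of three parts: a gradient-descent term $u_k-u^*-\eta(\nabla\obj(u_k)-\nabla\obj(u^*))$, a bias-variation term $\eta(H^{\top}-H_{\text{diag}})(\nabla\Phi^{(2)}(Hu_k+d)-\nabla\Phi^{(2)}(y^*))$, and the constant term $\eta(H^{\top}-H_{\text{diag}})\nabla\Phi^{(2)}(y^*)$.

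Next I would bound the three parts by the triangle inequality. For the gradient-descent term, expanding its squared norm and invoking the $m$-strong convexity of $\obj$ (to bound the inner-product term) and its $L$-smoothness (to bound the gradient-difference term) yields the factor $\sqrt{1-2m\eta+L^2\eta^2}$ times $\|u_k-u^*\|$; the radicand is nonnegative because $L\ge m$, so this is well defined. For the bias-variation term, $\sigma_{\max}(H^{\top}-H_{\text{diag}})=\sigma_{\max}(H-H_{\text{diag}})$, the smoothness modulus of $\Phi^{(2)}$, and $\|H(u_k-u^*)\|\le\sigma_{\max}(H)\|u_k-u^*\|$ combine into the factor $c\eta$ with $c$ exactly as in Lemma~\ref{lemma: unique stationary point}. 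The constant term has norm $\eta\|(H^{\top}-H_{\text{diag}})\nabla\Phi^{(2)}(y^*)\|$. Summing up gives the one-step recursion
\begin{equation*}
\|u_{k+1}-u^*\|\le\rho\,\|u_k-u^*\|+\eta\|(H^{\top}-H_{\text{diag}})\nabla\Phi^{(2)}(y^*)\|,\quad \rho=\sqrt{1-2m\eta+L^2\eta^2}+c\eta .
\end{equation*}

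It then remains to show $0<\rho<1$ on the stated step-size interval and to unroll the recursion. Positivity of $\rho$ is immediate. For $\rho<1$, I would rearrange to $\sqrt{1-2m\eta+L^2\eta^2}<1-c\eta$; the diagonal dominance \eqref{weakly coupled} gives $m>c$ via Lemma~\ref{lemma: unique stationary point}, so one first checks $1-c\eta>0$ and then squares, reducing $\rho<1$ to a quadratic inequality in $\eta$ whose solution set is the interval in the theorem. Finally, a straightforward induction on the recursion gives $\|u_k-u^*\|\le\rho^k\|u_0-u^*\|+\eta\|(H^{\top}-H_{\text{diag}})\nabla\Phi^{(2)}(y^*)\|\sum_{j=0}^{k-1}\rho^j$, which is \eqref{eq: stability result 1}; letting $k\to\infty$ with $0<\rho<1$ shows convergence to a ball around $u^*$ of radius proportional to the off-diagonal mismatch $\|(H^{\top}-H_{\text{diag}})\nabla\Phi^{(2)}(y^*)\|$.

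The argument is conceptually a perturbation estimate, so I expect the only delicate points to be (i) choosing the $\pm\,\eta(H^{\top}-H_{\text{diag}})\nabla\Phi^{(2)}(y^*)$ decomposition so that the leftover inside the contraction vanishes at $u^*$ and is $c\eta$-Lipschitz in $u_k$, while the residual bias is \emph{exactly} the constant appearing in \eqref{eq: stability result 1}; and (ii) the bookkeeping that turns $\rho<1$ into the claimed range of $\eta$, which relies on $m>c$ and $L\ge m$. Everything else is the textbook strongly-convex/smooth gradient-descent estimate combined with submultiplicativity of the spectral norm.
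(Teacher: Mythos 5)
Your proposal is correct and follows essentially the same route as the paper's proof: rewrite the decentralized update as a gradient step on $\obj$ plus a bias, split the bias via $\pm\nabla\Phi^{(2)}(y^*)$ into a $c\eta$-Lipschitz variation and a constant residual, apply the standard $\sqrt{1-2m\eta+L^2\eta^2}$ contraction estimate, and telescope. One small caveat: if you actually carry out the squaring step you sketch for $\rho<1$, the quadratic inequality yields $\eta<\tfrac{2(m-c)}{L^2-c^2}$, which (since $c<m$) is slightly \emph{smaller} than the interval $\bigl(0,\tfrac{2m-2c}{L^2-m^2}\bigr)$ stated in the theorem, so your assertion that the solution set coincides with the theorem's interval does not quite hold as written (the paper itself omits this verification entirely).
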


\begin{proof}
Please see Appendix \ref{proof for theorem 1}.
\end{proof}

To characterize the sub-optimality, we bound the distance between the globally optimal point $u^*$ and the limit point $\inff{u}$ to which the controller \eqref{eq: decentralized algorithm system} converges. The following theorem provides this upper bound.

\begin{theorem}
\label{theorem: no plant dynamic strongly convex vanilla}
    If $2m>1$ and the conditions in Theorem~\ref{theorem: no plant dynamic strongly convex} hold,
    then we have 
    \begin{equation}
    \label{bound: 2}
        \norm{u^*-\inff{u}} \leq \norm{(H^{\top}-H_{\text{diag}})\nabla \Phi^{(2)}(\inff{y})} \sqrt{\frac{1}{2m-1}}.
    \end{equation}
\end{theorem}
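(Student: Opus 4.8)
The plan is to relate the decentralized stationary point $\inff{u}$ to the true optimizer $u^*$ through the gradient of the composite objective $\obj$. By the chain rule, $\nabla \obj(u) = \nabla \Phi^{(1)}(u) + H^{\top}\nabla \Phi^{(2)}(Hu+d)$, so optimality of $u^*$ for \eqref{prob: introduction 3} gives $\nabla \obj(u^*) = 0$, while the stationarity condition of \eqref{eq: decentralized algorithm system} (as derived in the proof of Theorem~\ref{theorem: game theory}) gives $\nabla \Phi^{(1)}(\inff{u}) + H_{\text{diag}}^{\top}\nabla \Phi^{(2)}(\inff{y}) = 0$ with $\inff{y} = H\inff{u}+d$. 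Subtracting these identities and using $H_{\text{diag}}^{\top} = H_{\text{diag}}$ yields the first key step,
\begin{equation*}
\nabla \obj(\inff{u}) = (H^{\top}-H_{\text{diag}})\nabla \Phi^{(2)}(\inff{y}),
\end{equation*}
which is exactly the vector appearing on the right-hand side of \eqref{bound: 2}.

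The second step uses that $\obj$ is $m$-strongly convex (Section~\ref{section 2}), hence $\nabla \obj$ is $m$-strongly monotone. Applying this between $\inff{u}$ and $u^*$ and using $\nabla \obj(u^*)=0$ gives $\langle \nabla \obj(\inff{u}),\, \inff{u}-u^*\rangle \geq m\norm{\inff{u}-u^*}^2$. I would then upper-bound the left-hand side by Young's inequality, $\langle \nabla \obj(\inff{u}),\, \inff{u}-u^*\rangle \leq \tfrac12\norm{\nabla \obj(\inff{u})}^2 + \tfrac12\norm{\inff{u}-u^*}^2$. Combining and rearranging gives $(m-\tfrac12)\norm{\inff{u}-u^*}^2 \leq \tfrac12\norm{\nabla \obj(\inff{u})}^2$; dividing by $m-\tfrac12$, which is positive precisely because of the hypothesis $2m>1$, and taking square roots yields $\norm{u^*-\inff{u}} \leq \sqrt{\tfrac{1}{2m-1}}\,\norm{\nabla \obj(\inff{u})}$. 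Substituting the expression from the first step closes the argument.

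The proof is short, so the "hard part" is minor: it consists mainly in matching the chain-rule gradient of $\obj$ against the stationarity condition of the decentralized update in the first step, and in recalling that Theorem~\ref{theorem: no plant dynamic strongly convex} (diagonal dominance) already guarantees existence and uniqueness of $\inff{u}$, so the statement is non-vacuous. I note in passing that replacing Young's inequality by Cauchy--Schwarz in the second step would give the marginally sharper constant $1/m$ and would not even require $2m>1$; the form $\sqrt{1/(2m-1)}$ stated here is the natural outcome of the Young's-inequality route and keeps the estimate self-contained.
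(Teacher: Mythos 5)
Your proof is correct, and it takes a genuinely different --- and considerably more direct --- route than the paper's. The paper obtains \eqref{bound: 2} dynamically: it runs the centralized update \eqref{centralized update for agent} with a step size $\tilde\eta$, measures its distance to the decentralized fixed point $\inff{u}$, derives the recursion $\norm{\tilde u_{k+1}-\inff{u}}^2 \le \tilde\rho\,\norm{\tilde u_k-\inff{u}}^2+(\tilde\eta+\tilde\eta^2)\norm{(H^{\top}-H_{\text{diag}})\nabla\Phi^{(2)}(\inff{y})}^2$ with $\tilde\rho=(1-2m\tilde\eta+L^2\tilde\eta^2)(1+\tilde\eta)$, telescopes, and then sends $\tilde\eta\to 0$ to extract the constant $1/(2m-1)$; the hypothesis $2m>1$ enters there, to make $\tilde\rho<1$ for small $\tilde\eta$. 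Your argument is purely static: the identity $\nabla\obj(\inff{u})=(H^{\top}-H_{\text{diag}})\nabla\Phi^{(2)}(\inff{y})$, obtained by subtracting the decentralized stationarity condition from the chain-rule gradient of $\obj$, followed by $m$-strong monotonicity of $\nabla\obj$ between $\inff{u}$ and $u^*$ and Young's inequality. Both routes land exactly on \eqref{bound: 2}, and your prerequisites (strong convexity of $\obj$ from Section~\ref{section 2}, existence and uniqueness of $\inff{u}$ from the diagonal dominance \eqref{weakly coupled}) are all available. Your closing observation is also correct and worth stressing: replacing Young's inequality by Cauchy--Schwarz gives $\norm{u^*-\inff{u}}\le \tfrac{1}{m}\norm{(H^{\top}-H_{\text{diag}})\nabla\Phi^{(2)}(\inff{y})}$, which is never worse because $1/m^2\le 1/(2m-1)$ is equivalent to $(m-1)^2\ge 0$, and it removes the hypothesis $2m>1$ entirely. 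So the static route not only shortens the proof but strengthens the theorem; the condition $2m>1$ in the statement is an artifact of the paper's trajectory-based argument rather than an intrinsic requirement.
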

\begin{proof}
    Please see Appendix \ref{appendix: proof 4}.
\end{proof}

We obtain the bound \eqref{bound: 2} by analyzing the trajectory of the centralized controller \eqref{centralized update for agent} while taking the limit point $\inff{u}$ of the decentralized controller \eqref{eq: decentralized algorithm system} as a reference. Note that $2m>1$ is a sufficient condition for establishing closed-loop stability from this perspective.

\begin{remark}
    The sub-optimality of the controller \eqref{eq: decentralized algorithm system} depends on the coupling degree $H^{\top}-H_{\text{diag}}$ and the properties of objective functions (e.g., $\nabla\Phi^{(2)}(y)$ and $m$). The bound decreases as $m$ increases. It provides an estimate of the distance between the Nash equilibrium and the globally optimal solution. We will illustrate the tightness of this bound via simulations later in Section \ref{section 5}.
\end{remark}

\subsection{Interconnection with an LTI Plant}
Consider the closed-loop interconnection of the LTI plant \eqref{LTI} and the controller \eqref{eq: decentralized algorithm system}
\begin{subequations}
\label{sys: LTI}
    \begin{align}
    \label{LTI_x}
        x_{k+1} &= Ax_k + Bu_k,\\
        \label{LTI_y}
        y_k     &= Cx_k + Du_k + d,\\
        u_{k+1} &= u_k-\eta( \nabla \Phi^{(1)}(u_k) + H_{\text{diag}}\nabla\Phi^{(2)}(y_k)).
    \end{align}
\end{subequations}
The existence of plant dynamics will cause a difference between the real-time measurement and the steady-state response. Hence, we consider the coupled errors involving the distance to the steady state of the plant \eqref{LTI} given a fixed input (i.e., $\norm{x-H_xu}^2$) and the distance to the equilibrium point of the controller \eqref{eq: decentralized algorithm system} (i.e., $\norm{u-\inff{u}}^2$). These two coupled dynamics decay to zero with a linear rate if the steady-state map of the LTI plant \eqref{LTI} satisfies the diagonal dominance \eqref{weakly coupled}.

\begin{theorem}
\label{theorem: plant dynamic}
    Suppose that the stable LTI plant \eqref{LTI} satisfies \eqref{weakly coupled} and that Assumption \ref{assumption: chapter 2} holds. Then, there exists a positive constant $\eta^*$ such that $\forall \eta \in (0,\eta^*)$, the closed-loop system \eqref{sys: LTI} exhibits a linear convergence, i.e.,
    \begin{equation}
    \label{LTI result}
    \left\Vert
    \begin{matrix}
       x_{k}-H_xu_{k} \\
       u_{k}-\inff{u}\\
    \end{matrix}
    \right\Vert^2
    \leq (\lambda_{\max}(\Xi))^k
    \left\Vert
    \begin{matrix}
       x_0-H_xu_0 \\
       u_0-\inff{u}\\
    \end{matrix}
    \right\Vert^2,
    \end{equation}
    where $H_x=(I-A)^{-1}B$, $0<\lambda_{\max}(\Xi)<1$, $\Xi$ is a symmetric matrix given in Appendix \ref{appendix: proof 5}, and $\eta^*$ is given in Appendix \ref{appendix: proof 5}. 
\end{theorem}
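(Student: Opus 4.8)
The plan is a discrete-time, two-time-scale (singular-perturbation-style) argument: introduce error coordinates that separate the plant transient from the controller optimization error, derive a coupled recursion for these two errors, and reduce closed-loop contraction to bounding the largest eigenvalue of a $2\times 2$ symmetric gain matrix $\Xi$ that I would construct explicitly. Concretely, let $\tilde x_k := x_k - H_x u_k$ (the distance of the state to the steady state associated with the current input) and $\tilde u_k := u_k - \inff{u}$, where $\inff{u}$ is the unique zero of $F(u) := \nabla\Phi^{(1)}(u) + H_{\text{diag}}\nabla\Phi^{(2)}(Hu+d)$; existence and uniqueness follow from Lemma~\ref{lemma: unique stationary point} and~\eqref{weakly coupled} (using $H_{\text{diag}}^\top = H_{\text{diag}}$), and one checks directly that $\inff{u}$ is also the stationary point of the full closed loop~\eqref{sys: LTI}. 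Using the identity $AH_x + B = H_x$ (immediate from $H_x=(I-A)^{-1}B$), the state error obeys $\tilde x_{k+1} = A\tilde x_k - H_x(u_{k+1}-u_k)$ with $u_{k+1}-u_k = -\eta(\nabla\Phi^{(1)}(u_k) + H_{\text{diag}}\nabla\Phi^{(2)}(y_k)) = O(\eta)$; substituting $y_k = C\tilde x_k + Hu_k + d$ and $F(\inff{u})=0$, the controller error obeys $\tilde u_{k+1} = \tilde u_k - \eta(F(u_k)-F(\inff{u})) - \eta H_{\text{diag}}(\nabla\Phi^{(2)}(C\tilde x_k + Hu_k + d) - \nabla\Phi^{(2)}(Hu_k+d))$.

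Next I would turn these into scalar recursions for the norms. For the controller error, $(m-c)$-strong monotonicity of $F$ (Lemma~\ref{lemma: unique stationary point}) together with its Lipschitz modulus $\ell$ (determined by the smoothness constants in Assumption~\ref{assumption: chapter 2} and $\sigma_{\max}(H)$) gives $\norm{\tilde u_k - \eta(F(u_k)-F(\inff{u}))} \le \sqrt{1-2\eta(m-c)+\eta^2\ell^2}\norm{\tilde u_k}$, while $NL_y$-smoothness of $\Phi^{(2)}$ bounds the residual term by $\eta\kappa_2\norm{\tilde x_k}$. For the state error, since $A$ is Schur I pick $P = P^\top \succ 0$ with $A^\top P A - P \prec 0$, so $\norm{A\tilde x_k}_P \le \gamma \norm{\tilde x_k}_P$ for some $\gamma \in (0,1)$, and bounding the $O(\eta)$ forcing through the smoothness constants and norm equivalence yields $\norm{\tilde x_{k+1}}_P \le (\gamma + \eta\kappa_1)\norm{\tilde x_k}_P + \eta\kappa_1'\norm{\tilde u_k}$. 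Hence $w_k := (\norm{\tilde x_k}_P, \norm{\tilde u_k})^\top$ satisfies $w_{k+1} \le M(\eta) w_k$ componentwise, with the entrywise-nonnegative matrix
\[
M(\eta) = \begin{bmatrix} \gamma + \eta\kappa_1 & \eta\kappa_1' \\ \eta\kappa_2 & \sqrt{1-2\eta(m-c)+\eta^2\ell^2} \end{bmatrix},
\]
so that $\norm{w_{k+1}}^2 \le \lambda_{\max}(M(\eta)^\top M(\eta))\norm{w_k}^2$; setting $\Xi := M(\eta)^\top M(\eta)$ (symmetric, positive semidefinite) and accounting for the fixed equivalence constants between $\norm{\cdot}_P$ and $\norm{\cdot}$ yields~\eqref{LTI result} upon iteration.

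It then remains to exhibit $\eta^*>0$ such that $0 < \lambda_{\max}(\Xi) < 1$ for all $\eta \in (0,\eta^*)$. Positivity holds because $M(\eta)$ is invertible near $\eta = 0$. For the strict upper bound, $M(0) = \mathrm{diag}(\gamma,1)$, so the diagonal entries of $\Xi$ are $\gamma^2 + O(\eta)$ and $1 - 2(m-c)\eta + O(\eta^2)$ and the off-diagonal entry is $O(\eta)$; since the spectral gap $1-\gamma^2$ is strictly positive, the $O(\eta)$ coupling perturbs the dominant eigenvalue only at order $O(\eta^2)$, so $\lambda_{\max}(\Xi) = 1 - 2(m-c)\eta + O(\eta^2)$, which is below $1$ once $\eta$ is smaller than an explicit threshold $\eta^*$ obtained by making the $O(\eta^2)$ constants quantitative.

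The main obstacle is precisely this last step. At $\eta = 0$ the controller subsystem is only marginally stable ($\norm{\tilde u_{k+1}} = \norm{\tilde u_k}$), so the proof must show that the first-order stabilizing effect of strong monotonicity, with rate $m - c > 0$ — which is where the diagonal-dominance hypothesis~\eqref{weakly coupled} is used — strictly dominates the destabilizing feedback entering through the plant transient $\tilde x_k$, a feedback that also scales with $\eta$ but, owing to the strict contraction $\gamma<1$ of the plant mode, contributes to the relevant eigenvalue only at second order. Carrying out this balance quantitatively, i.e., assembling $\gamma$, the conditioning of $P$, and the Lipschitz/smoothness moduli from Assumption~\ref{assumption: chapter 2} into a clean closed-form expression for $\eta^*$ and $\Xi$, is the remaining technical burden; the algebra is routine but lengthy and is what Appendix~\ref{appendix: proof 5} carries out.
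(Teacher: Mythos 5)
Your proposal is correct and follows the same skeleton as the paper's proof: the same error coordinates $x_k-H_xu_k$ and $u_k-\inff{u}$, the same coupled two-dimensional recursion driven by strong monotonicity of $\nabla\Phi^{(1)}(u)+H_{\text{diag}}\nabla\Phi^{(2)}(Hu+d)$ (Lemma~\ref{lemma: unique stationary point}) and the Lipschitz bounds from Assumption~\ref{assumption: chapter 2}, and the same reduction to $\lambda_{\max}$ of a $2\times 2$ symmetric matrix. Two of your technical choices differ in ways worth noting. First, the paper bounds the plant transient directly via $\lambda_{\max}(A^\top A)$ and needs $t=1-\lambda_{\max}(A^\top A)>0$ for its step-size thresholds $\eta_1^*,\eta_2^*$ to be positive; this implicitly requires $\sigma_{\max}(A)<1$, which is stronger than the stated asymptotic (Schur) stability of \eqref{LTI}. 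Your Lyapunov-weighted norm $\norm{\cdot}_P$ with $A^\top PA-P\prec 0$ handles any Schur $A$, at the price that the geometric decay is obtained in the weighted norm, so \eqref{LTI result} is recovered only up to a condition-number factor of $P$ (or exactly, if the stacked vector is redefined with $P^{1/2}(x_k-H_xu_k)$). Second, where you argue that the $O(\eta)$ off-diagonal coupling shifts the dominant eigenvalue only at $O(\eta^2)$ — a clean qualitative existence proof for $\eta^*$ — the paper instead applies a Schur-complement test to $\Xi$ and solves the resulting quadratic in $\eta$ to produce a closed-form $\eta^*$. Your route is more general and more transparent about why smallness of $\eta$ suffices; the paper's buys an explicit, computable threshold. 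Neither difference is a gap.
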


\begin{proof}
    Please see Appendix \ref{appendix: proof 5}.
\end{proof}

Note that the steady-state inputs of systems \eqref{sys: LTI} and \eqref{sys: wo dynamic} are identical. By Lemma \ref{lemma: unique stationary point}, if \eqref{weakly coupled} holds, the stationary point $\inff{u}$ of the system \eqref{sys: wo dynamic} is unique. It means that the system \eqref{sys: LTI}, if it converges, will converge to the stationary point of the system \eqref{sys: wo dynamic}. Hence, the upper bound on the distance between $\inff{u}$ and the globally optimal point $u^*$ in \eqref{bound: 2} still holds.

\section{Numerical Results}
\label{section 5}
We consider a voltage control problem for a direct current (DC) power system. The model is adapted from \cite{JinxinZhao}, and it is illustrated in Fig. \ref{fig: chapter 2 8 node system}. This system consists of 8 nodes and 9 edges. There are two hub-like structures with a connection between nodes 4 and 5. The system dynamics are
\begin{equation}
\begin{split}
    \label{eq: power system formulation}
    \begin{bmatrix}
        C&0\\
        0&L
    \end{bmatrix}
    \begin{bmatrix}
        \dot{V}\\
        \dot{f}
    \end{bmatrix}=&
    \begin{bmatrix}
        G&-B\\
        B^\top&-R
    \end{bmatrix}
    \begin{bmatrix}
        V\\
        f
    \end{bmatrix}+
    \begin{bmatrix}
        I^* + I_c\\
        0
    \end{bmatrix},\\
    V_{\text{m}}=&V+d,
\end{split}
\end{equation}
where $V \in \bb{R}^8$ is the node voltage, $V_{\text{m}} \in \bb{R}^8$ is the measured voltage with the unknown measurement error $d\in \bb{R}^8$, $f\in \bb{R}^8$ is the line current, $I^*\in \bb{R}^8$ is the reference current injection, and $I_c\in \bb{R}^8$ is the vector consisting of controllable current injection at each node. There are two diagonal matrices $C\in \bb{R}^{8\times8}$ and $G\in \bb{R}^{8\times8}$, where the diagonal elements represent the capacitance and the resistance of each node, respectively. Moreover, $L\in \bb{R}^{9\times9}$ and $R\in \bb{R}^{9\times9}$ are also diagonal matrices, where the diagonal elements indicate the inductance and the resistance of each line, respectively. The incidence matrix of the electrical network is $B\in \bb{R}^{8\times9}$. 

Given an unknown change $\Delta I \in \mathbb{R}^8$ in current injection $I^*$, the objective is to operate the system such that the voltage measurement $V_{\text{m}}$ at each node tracks the reference value $V_{\text{m, ref}}$ with a minimal control effort related to $I_c$. Furthermore, we consider scenarios where communication is unfavorable (e.g., to avoid attacks). Hence, the current injection at each node is independently decided based on locally available measurements. This objective is 
\begin{equation}
    \label{prob: quadratic function DC power system}
    \begin{aligned}
    \min_{I_c,V_{\text{m}}} \quad & \frac{1}{2}(\gamma_1\norm{I_c}^2+\gamma_2\norm{V_{\text{m}}-V_{\text{m, ref}}}^2)\\
    \textrm{s.t.} \quad & V_{\text{m}}=H(I_c+I^*-\Delta I)+d,
    \end{aligned}
\end{equation}
where the input-output steady-state sensitivity matrix $H =
    \begin{bmatrix}
        I&0
    \end{bmatrix}
    \begin{bmatrix}
        G&-B\\
        B^\top&-R
    \end{bmatrix}^{-1}
    \begin{bmatrix}
        I\\
        0
    \end{bmatrix}\in \bb{R}^{8\times 8}$, and $V_{\text{m, ref}}=HI^*+d$.
We set $C$ and $L$ to be the identity matrix. We set $I^*$ and $\Delta I$ to be all-ones vectors. The resistor $R$ at each line equals $10$. We choose different values of $G$ to analyze cases with different levels of coupling (see \eqref{weakly coupled}).
The resulting system \eqref{eq: power system formulation} is stable \cite{JinxinZhao}. We further discretize the system \eqref{eq: power system formulation} using Euler forward discretization with a step-size $\epsilon=0.1$. We set $\gamma_1=\gamma_2=1$ and select the step size used by the controllers $\eta = 0.05$, which meets the stability condition provided in Theorem \ref{theorem: no plant dynamic strongly convex}.

\begin{figure}[!t]
    \centering
    \includegraphics[width=0.6\linewidth]{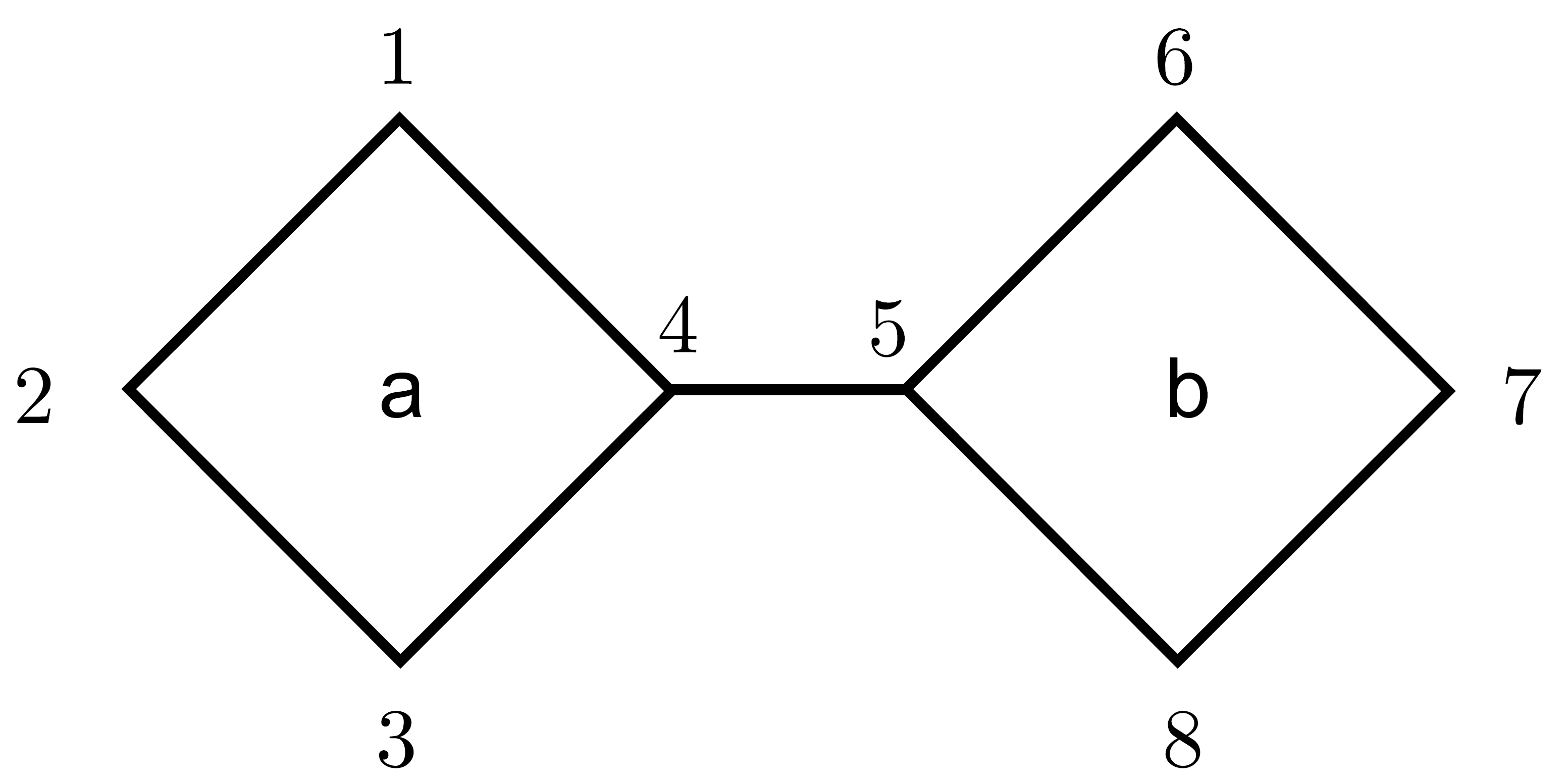}
    \caption{An 8-node DC power system.}
    \label{fig: chapter 2 8 node system}
\end{figure}

In Fig. \ref{fig: chapter 2 8 node system input error (G=1)}, the relative errors (i.e., $\norm{u_k-u^*}/\norm{u^*}$) for the input $u \triangleq I_c$ are plotted when $G=1$. We observe that the normal gradient method (i.e., the centralized controller) brings the system to the optimal operating point, whereas the decentralized controller \eqref{eq: decentralized algorithm system} leads to a sub-optimal solution. Furthermore, in Fig. \ref{fig: G}, we illustrate the evolution of the upper bound \eqref{bound: 2} and the corresponding true sub-optimality when the parameter $G$ varies from 1 to 100. As the degree of diagonal dominance of the sensitivity matrix $H$ increases (i.e., $G$ increases from 1 to 100), we observe a decrease in sub-optimality, which aligns with the results outlined in Theorem \ref{theorem: no plant dynamic strongly convex vanilla}. Notably, the bound \eqref{bound: 2} provides a rather accurate estimation of the sub-optimality associated with the decentralized controller. From the perspective of the system operator, the bound \eqref{bound: 2} can act as a surrogate reference for the price of decentralization, namely,  the extra cost incurred by disregarding the dynamic cross-coupling.

\begin{figure}[t!]
    \subfloat[Algebraic steady-state map.\label{fig: chapter 2 8 node system input error al}]{
       \includegraphics[width=0.48\linewidth]{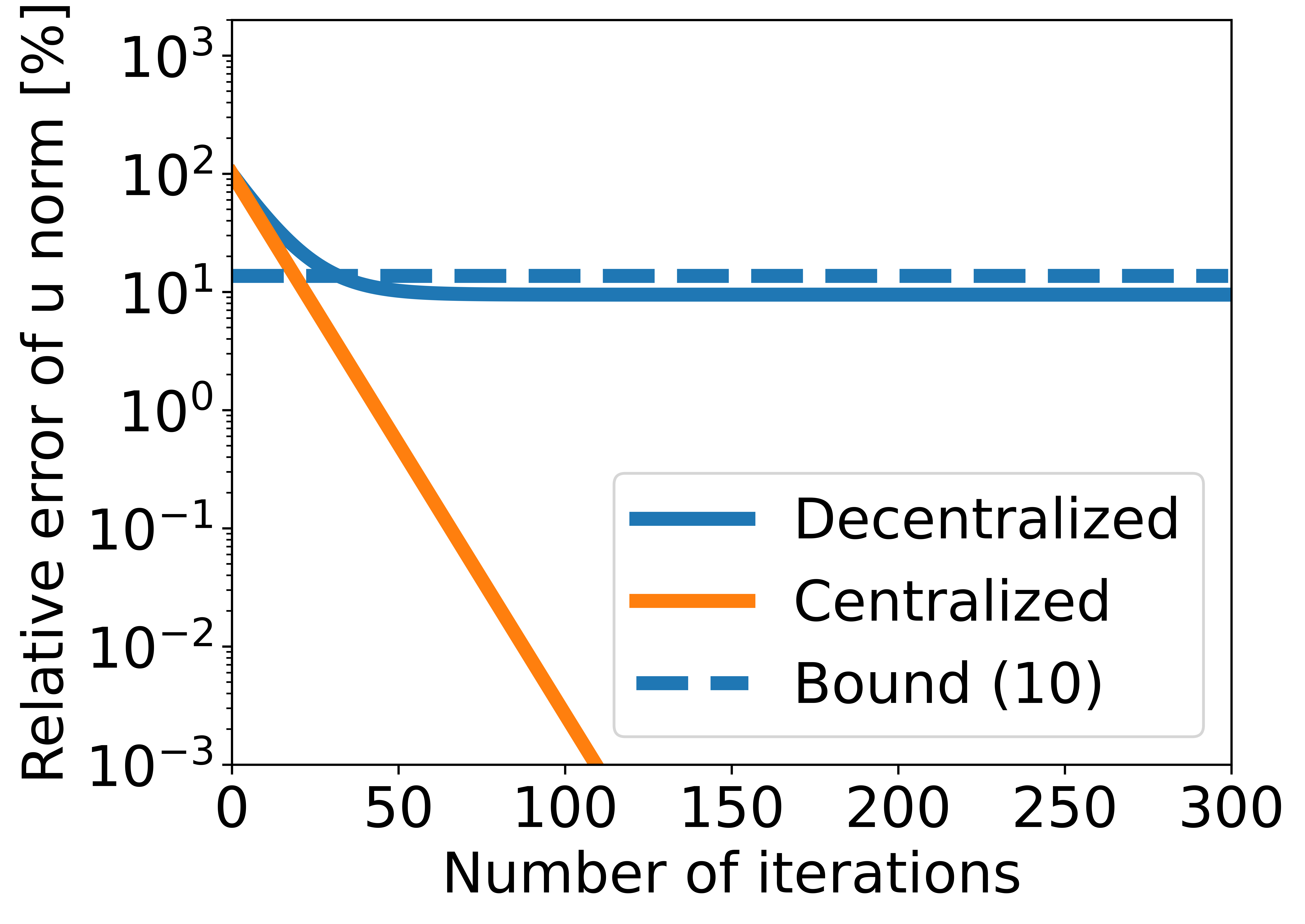}}
    \hfill
    \subfloat[LTI plant.\label{fig: chapter 2 8 node system input error da}]{
        \includegraphics[width=0.48\linewidth]{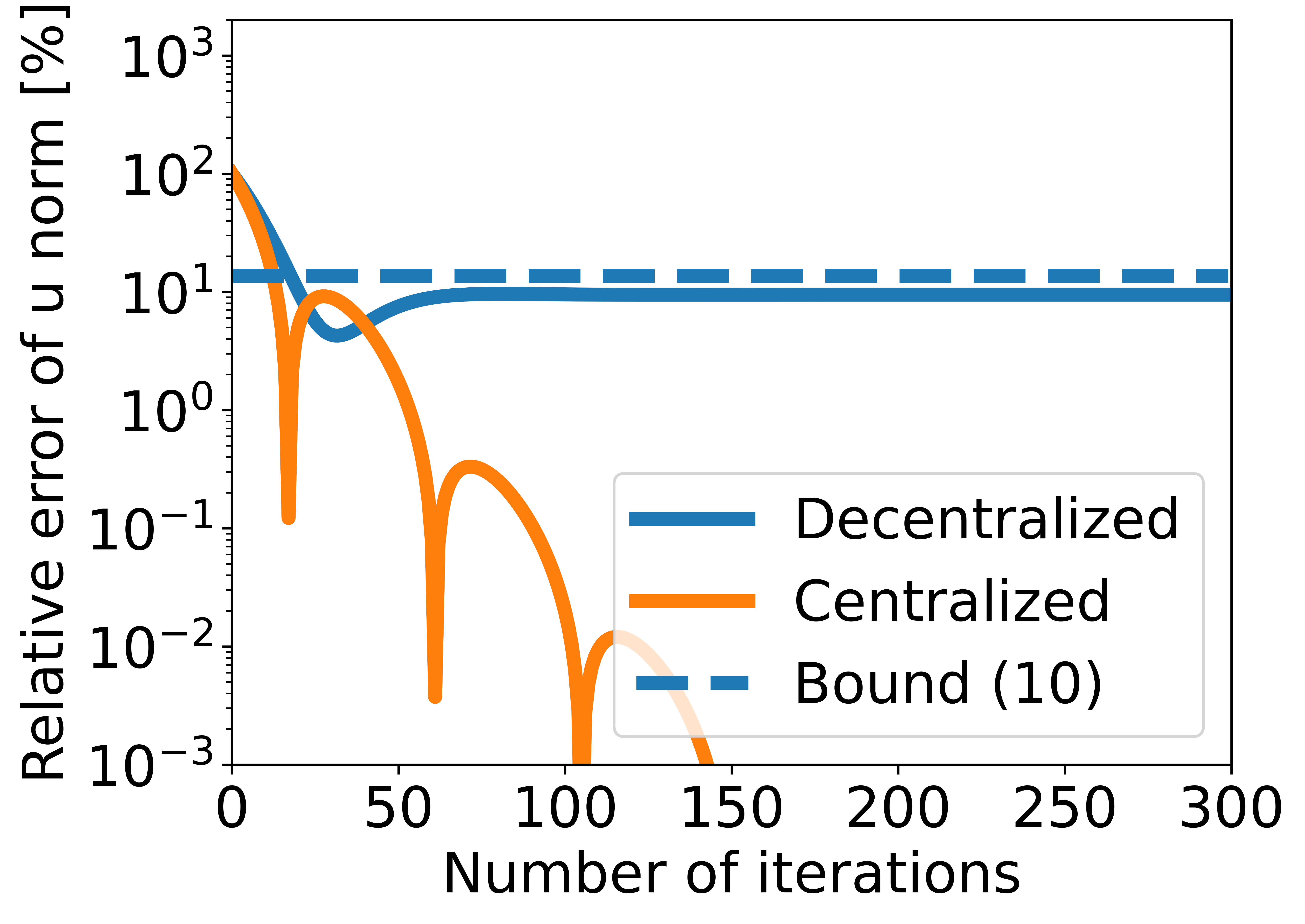}}
    \caption{Relative distance to the globally optimal point (G=1).}
    \label{fig: chapter 2 8 node system input error (G=1)}
\end{figure}

\begin{figure}[t!]
\centering
\includegraphics[width=0.7\linewidth]{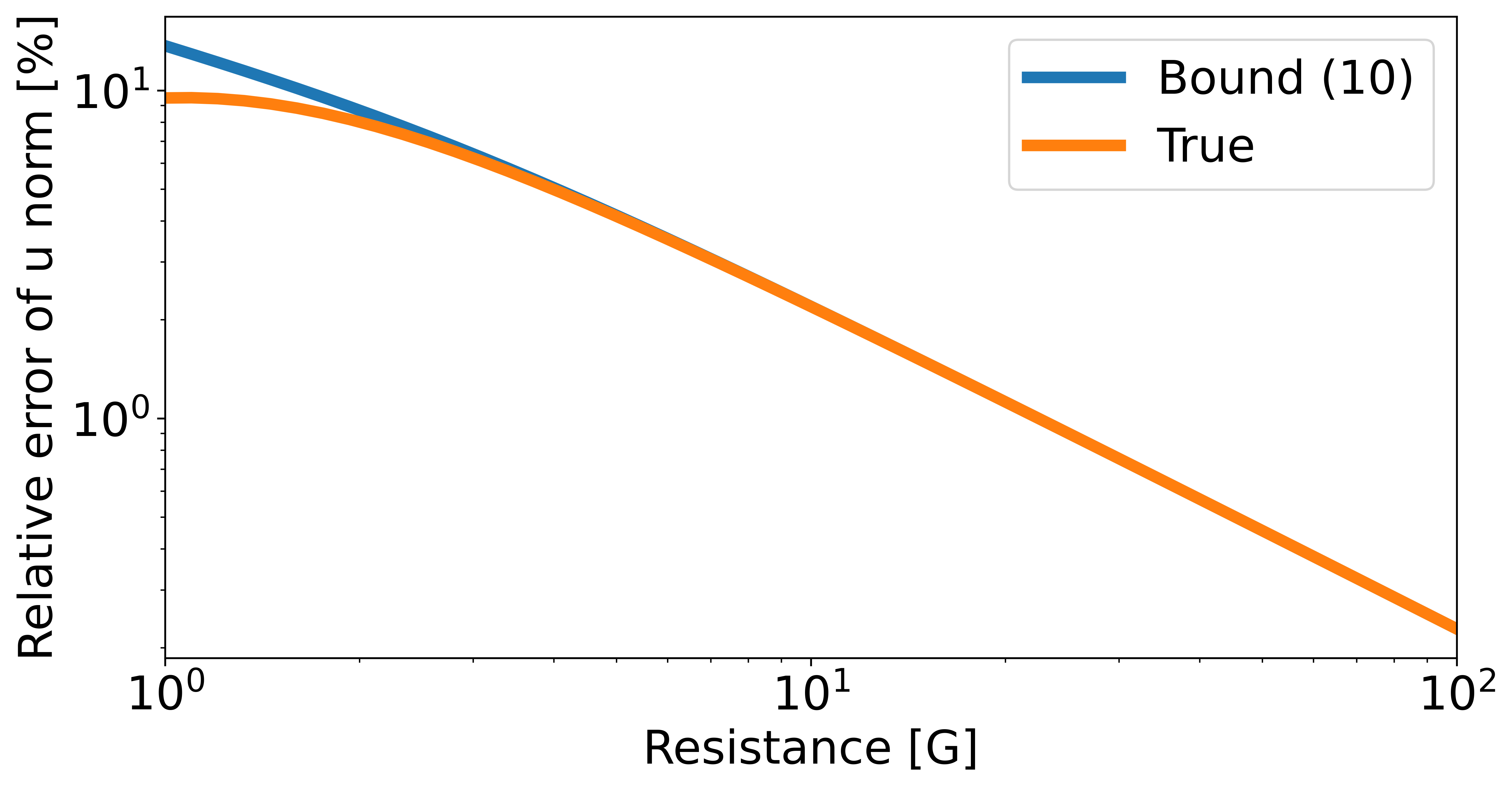}
\caption{Relative distance to the globally optimal point with different values of $G$ (i.e., different degrees of diagonal dominance).}
\label{fig: G}
\end{figure}
\section{Conclusion}
\label{section 6}
We considered feedback optimization for networked systems, where inter-agent communication may be expensive or hard to establish. We proposed a fully decentralized controller through an approximated sensitivity matrix based on local model information. We proved that the resulting steady-state operating points are Nash equilibria. Moreover, we derived sufficient conditions for the stability of the closed-loop systems. We explicitly characterized the sub-optimality of the decentralized controller, reflecting the price of decentralization. Through simulations on a DC power grid, we illustrated the performance of the decentralized controller and the tightness of the bound on sub-optimality.


\bibliographystyle{IEEEtran}
\bibliography{references}

\balance
\appendix
\subsection{Proof of Lemma \ref{lemma: unique stationary point}}
\label{Proof for lemma}
For $\forall u_1, u_2 \in \bb{R}^N$, we have
    \begin{align*}
        &(\nabla\Phi^{(1)}(u_1) + H_{\text{diag}}^{\top}\nabla \Phi^{(2)}(Hu_1+d)\\
        &-\nabla\Phi^{(1)}(u_2) - H_{\text{diag}}^{\top}\nabla \Phi^{(2)}(Hu_2+d))^{\top}(u_1-u_2)\\
        \overset{\text{(s.1)}}{\geq}& Nm_u\norm{u_1\!-\!u_2}^2 \!+\! \left(\nabla \Phi^{(2)}(Hu_1\!+\!d)\!-\!\nabla \Phi^{(2)}(Hu_2\!+\!d)\right)^{\top}\\
        &\cdot(H_{\text{diag}}(u_1\!-\!u_2)+Hu_1\!+\!d-(Hu_2\!+\!d)-H(u_1\!-\!u_2))\\
        \overset{\text{(s.2)}}{\geq}& (Nm_u+Nm_y\sigma_{\min}^2(H))\norm{u_1-u_2}^2\\
        &-\sigma_{\max}(H_{\text{diag}}-H)\sigma_{\max}(H)NL_y\norm{u_1-u_2}^2\\
        =&\left(Nm_u\!+\!Nm_y\sigma_{\min}^2(H)\!-\!\sigma_{\max}(H_{\text{diag}}\!-\!H)\sigma_{\max}(H)NL_y\right)\\
        &\cdot\norm{u_1-u_2}^2\\
        =&(m-c)\norm{u_1-u_2}^2,
    \end{align*}
where in (s.1), we use the strong convexity of $\Phi^{(1)}(u)$ and reorganize the terms. The strong convexity and the smoothness of $\Phi^{(2)}(y)$ are utilized in (s.2).

\subsection{Proof of Theorem \ref{theorem: no plant dynamic strongly convex}}
\label{proof for theorem 1}
The sub-optimality of the input $u$ at time step $k+1$ satisfies
\begin{equation*}
\label{no dynamic strongly convex proof: 1}
   \begin{split}
    \|&u_{k\!+\!1}\!-\!u^*\|=\|u_k\!-\!u^*\!-\!\eta \big(\nabla \obj(u_k)\!-\!(H^{\top}\!-\!H_{\text{diag}})\\
    &\quad\cdot(\nabla \Phi^{(2)}(y_k)-\nabla \Phi^{(2)}(y^*)+\nabla \Phi^{(2)}(y^*))\big)\|\\
    &\overset{\text{(s.1)}}{\leq} \|u_k-u^*-\eta \nabla \obj(u_k)\|\!+\!\eta\|(H^{\top}\!-\!H_{\text{diag}})\nabla \Phi^{(2)}(y^*)\|\\
    &\quad+\eta\sigma_{\max}(H^{\top}-H_{\text{diag}})NL_y\sigma_{\max}(H)\|u_k-u^*\|\\
    &\overset{\text{(s.2)}}{\leq}(\sqrt{1-2m\eta+L^2\eta^2}+c\eta)\|u_k-u^*\|\\
    &\quad+\eta\|(H^{\top}-H_{\text{diag}})\nabla \Phi^{(2)}(y^*)\|,\\
\end{split} 
\end{equation*}
where the triangle inequality and the smoothness of $\Phi^{(2)}(y)$ are utilized in (s.1). In (s.2),  the strong convexity and smoothness of $\obj(u)$ are used to bound $\|u_k-u^*-\eta \nabla \obj(u_k)\|$. After reorganizing the terms, we obtain the above bound. Telescoping the inequality (s.2), we finished the proof with
\begin{equation*}
    \|u_{k}\!-\!u^*\| \leq \rho^k\|u_0-u^*\|\!+\!\eta\|(H^{\top}\!-\!H_{\text{diag}})\nabla \Phi^{(2)}(y^*)\|\sum_{j=0}^{k-1}\rho^j,
\end{equation*}
where $\rho$ is defined as in Theorem \ref{theorem: no plant dynamic strongly convex}.

\subsection{Proof of Theorem \ref{theorem: no plant dynamic strongly convex vanilla}}
\label{appendix: proof 4}
Let $(\tilde u_k)_{k\in \mathbb{N}}$ and $\tilde \eta$ be the sequence of the inputs and the step size of the centralized controller \eqref{centralized update for agent}, respectively. Then,
\begin{align*}
    \norm{\tilde u_{k+1}-&\inff{u}}^2
    =\norm{\tilde u_k-\inff{u}}^2\\
    &-2\tilde \eta\nabla \obj(\tilde u_k)^{\top}(\tilde u_k-\inff{u})+{\tilde \eta}^2\norm{\nabla\obj(\tilde u_k)}^2\\
    \overset{\text{(s.1)}}{\leq}&(1-2m\tilde \eta+L^2{\tilde \eta}^2)\norm{\tilde u_k-\inff{u}}^2\\
    &+2\tilde \eta\left((H^{\top}-H_{\text{diag}})\nabla \Phi^{(2)}(\inff{y})\right)^{\top}\\
    &\cdot(\tilde \eta(\nabla\obj(\tilde u_k)-\nabla \obj(\inff{u}))-(\tilde u_k-\inff{u}))\\
    &+{\tilde \eta}^2\norm{(H^{\top}-H_{\text{diag}})\nabla \Phi^{(2)}(\inff{y})}^2\\
    \overset{\text{(s.2)}}{\leq}&(1-2m\tilde \eta+L^2{\tilde \eta}^2)\norm{\tilde u_k-\inff{u}}^2\\
    &+\tilde \eta\norm{(H^{\top}-H_{\text{diag}})\nabla \Phi^{(2)}(\inff{y})}^2\\
    &+ \tilde \eta\norm{\tilde \eta(\nabla\obj(\tilde u_k)-\nabla \obj(\inff{u}))-(\tilde u_k-\inff{u})}^2\\
    &+{\tilde \eta}^2\norm{(H^{\top}-H_{\text{diag}})\nabla \Phi^{(2)}(\inff{y})}^2\\
    \overset{\text{(s.3)}}{=}&(1-2m\tilde \eta+L^2{\tilde \eta}^2)\norm{\tilde u_k-\inff{u}}^2\\
    &+\tilde \eta(\norm{\tilde \eta(\nabla\obj(\tilde u_k)-\nabla \obj(\inff{u}))}^2\\
    &\!-\!2\tilde \eta(\nabla\obj(\tilde u_k)\!-\!\nabla \obj(\inff{u}))^{\top}(\tilde u_k\!-\!\inff{u})\!+\!\norm{\tilde u_k\!-\!\inff{u}}^2)\\
    &+({\tilde \eta}^2+\tilde \eta)\norm{(H^{\top}-H_{\text{diag}})\nabla \Phi^{(2)}(\inff{y})}^2\\
    \overset{\text{(s.4)}}{\leq}&(1-2m\tilde \eta+L^2{\tilde \eta}^2)(\tilde \eta+1)\norm{\tilde u_k-\inff{u}}^2\\
    &+({\tilde \eta}^2+\tilde \eta)\norm{(H^{\top}-H_{\text{diag}})\nabla \Phi^{(2)}(\inff{y})}^2,
\end{align*}
where in (s.1), we rewrite $\nabla \obj(\tilde u_k)=\nabla \obj(\tilde u_k)-\nabla \obj(\inff{u})+\nabla \obj(\inff{u})-\nabla^* \obj(\inff{u})$ and reorganize the terms. In (s.2), we use the inequality $2a^{\top}b \leq \norm{a}^2+\norm{b}^2, \forall a,b $. In (s.3), we expand $\norm{\tilde \eta(\nabla\obj(\tilde u_k)-\nabla \obj(\inff{u}))-(\tilde u_k-\inff{u})}^2$ and reorganize the terms. In (s.4), we use the strong convexity and smoothness of $\obj(u)$. By recursively using the above formula for $0,1,\cdots, k$, we obtain
\begin{equation}
\label{Appendix: proof theorem 4.2}
\begin{split}
    \norm{\tilde u_k -& \inff{u}}^2\leq\tilde{\rho}^k\|u_0-\inff{u}\|^2\\
    &+(\tilde \eta + {\tilde \eta}^2)\|(H^{\top}-H_{\text{diag}})\nabla \Phi^{(2)}(\inff{y})\|^2\sum_{j=0}^{k-1}\tilde{\rho}^j,
\end{split}
\end{equation}
where $\tilde{\rho}=(1-2m\tilde \eta +L^2{\tilde \eta}^2)(\tilde \eta + 1)$, and $\tilde{\rho}<1$ for a sufficiently small $\tilde \eta>0$ provided that $2m>1$. For every such $\tilde\eta$, \eqref{Appendix: proof theorem 4.2} always holds, and the left-hand side of \eqref{Appendix: proof theorem 4.2} remains the same. By analyzing the derivative of the right-hand side of \eqref{Appendix: proof theorem 4.2} with respect to $\tilde \eta$, we observe that it is monotonically increasing for $\tilde \eta > 0$. When $\tilde \eta \rightarrow 0$, we obtain the smallest possible upper bound $\norm{(H^{\top}-H_{\text{diag}})\nabla \Phi^{(2)}(\inff{y})}^2 \frac{1}{2m-1}$.


\subsection{Proof of Theorem \ref{theorem: plant dynamic}}
\label{appendix: proof 5}
To start, we define
\begin{align*}
    \varphi_1 &= \nabla \Phi^{(1)}(u_k) + H_{\text{diag}}\nabla\Phi^{(2)}(y_k), \\
    \varphi_2 &= x_{k}-H_xu_{k}.
\end{align*}
Then, we provide a preparatory lemma that presents some useful technical results. Note that $\inff{y}\triangleq H \inff{u} + d$.

\begin{lemmaappendix}
\label{appendix lemma}
The following inequalities hold:
\begin{enumerate}
    \item $\norm{x_{k+1}-H_xu_{k+1}}^2 \leq \eta^2 \varphi_1^{\top}H_x^{\top}H_x\varphi_1 + 2\eta\sigma_{\max}(H_x^{\top}A)\norm{\varphi_1}\norm{\varphi_2} + \varphi_2^{\top}A^{\top}A\varphi_2$;
    \item $\varphi_1^{\top}(u_k-\inff{u}) \geq Nm_u\norm{u_k-\inff{u}}^2 + Nm_y\norm{y_k-\inff{y}}-NL_y\sigma_{\max}(H_{\text{diag}}-H)\norm{y_k-\inff{y}}\norm{u_k-\inff{u}}-NL_y\sigma_{\max}(C)\norm{y_k-\inff{y}}\norm{\varphi_2}$;
    \item $\norm{y_k-\inff{y}} \leq \sigma_{\max}(C)\norm{\varphi_2}+\sigma_{\max}(H)\norm{u_k-\inff{u}}$;
    \item $\norm{\varphi_1} \leq \left(NL_u+NL_y\sigma_{\max}(H_{\text{diag}})\sigma_{\max}(H)\right)\norm{u_k-\inff{u}}+NL_y\sigma_{\max}(H_{\text{diag}})\sigma_{\max}(C)\norm{\varphi_2}$.
\end{enumerate}
\end{lemmaappendix}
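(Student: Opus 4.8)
The four inequalities are all elementary once three structural facts about the closed loop~\eqref{sys: LTI} are available, so the plan is to establish these facts first and then assemble each bound. \emph{Fact (i): the plant-error recursion.} Since $H_x=(I-A)^{-1}B$, we have $B=(I-A)H_x$, hence $Bu_k-H_xu_k=-AH_xu_k$; substituting $u_{k+1}=u_k-\eta\varphi_1$ into $x_{k+1}-H_xu_{k+1}=Ax_k+Bu_k-H_xu_{k+1}$ gives
\[
x_{k+1}-H_xu_{k+1}=A\varphi_2+\eta H_x\varphi_1 .
\]
\emph{Fact (ii): the output-error decomposition.} Using $\inff{y}=H\inff{u}+d=(CH_x+D)\inff{u}+d$ and $x_k-H_x\inff{u}=\varphi_2+H_x(u_k-\inff{u})$ yields, since $H=CH_x+D$,
\[
y_k-\inff{y}=C\varphi_2+H(u_k-\inff{u}) .
\]
\emph{Fact (iii): the fixed-point identity.} Any fixed point $\inff{u}$ of~\eqref{sys: LTI} satisfies $\nabla\Phi^{(1)}(\inff{u})+H_{\text{diag}}\nabla\Phi^{(2)}(\inff{y})=0$, and under the diagonal dominance~\eqref{weakly coupled} assumed in Theorem~\ref{theorem: plant dynamic}, Lemma~\ref{lemma: unique stationary point} ensures such $\inff{u}$ exists and is unique; consequently $\varphi_1=\bigl(\nabla\Phi^{(1)}(u_k)-\nabla\Phi^{(1)}(\inff{u})\bigr)+H_{\text{diag}}\bigl(\nabla\Phi^{(2)}(y_k)-\nabla\Phi^{(2)}(\inff{y})\bigr)$.

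With these in hand, Item~1 follows by expanding $\norm{A\varphi_2+\eta H_x\varphi_1}^2$ and bounding the cross term $2\eta\,\varphi_2^{\top}A^{\top}H_x\varphi_1$ by Cauchy--Schwarz, using $\sigma_{\max}(A^{\top}H_x)=\sigma_{\max}(H_x^{\top}A)$. Item~3 is the triangle inequality applied to Fact~(ii) together with $\norm{Cz}\le\sigma_{\max}(C)\norm{z}$ and $\norm{Hz}\le\sigma_{\max}(H)\norm{z}$. Item~4 starts from the fixed-point form of $\varphi_1$ in Fact~(iii), applies $\norm{H_{\text{diag}}z}\le\sigma_{\max}(H_{\text{diag}})\norm{z}$ and the $NL_u$- and $NL_y$-Lipschitz continuity of $\nabla\Phi^{(1)}$ and $\nabla\Phi^{(2)}$ (Assumption~\ref{assumption: chapter 2} and the global smoothness constants noted after it), and then substitutes Item~3 to trade $\norm{y_k-\inff{y}}$ for $\norm{\varphi_2}$ and $\norm{u_k-\inff{u}}$.

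The step that requires attention is Item~2, which reproduces the strong-monotonicity bookkeeping behind Lemma~\ref{lemma: unique stationary point} in the presence of the plant error $\varphi_2$. I would pair the fixed-point form of $\varphi_1$ with $u_k-\inff{u}$: the $\Phi^{(1)}$ contribution gives $Nm_u\norm{u_k-\inff{u}}^2$ by strong monotonicity. For the $\Phi^{(2)}$ contribution, write $H_{\text{diag}}=H-(H-H_{\text{diag}})$; on the $H$ part substitute $H(u_k-\inff{u})=(y_k-\inff{y})-C\varphi_2$ from Fact~(ii), so one term becomes $\bigl(\nabla\Phi^{(2)}(y_k)-\nabla\Phi^{(2)}(\inff{y})\bigr)^{\top}(y_k-\inff{y})\ge Nm_y\norm{y_k-\inff{y}}^2$ and the leftover is controlled via $NL_y$-Lipschitzness and $\sigma_{\max}(C)$, yielding the $-NL_y\sigma_{\max}(C)\norm{y_k-\inff{y}}\norm{\varphi_2}$ term; the $(H-H_{\text{diag}})$ part is bounded by $NL_y\sigma_{\max}(H-H_{\text{diag}})\norm{y_k-\inff{y}}\norm{u_k-\inff{u}}$, again by Lipschitzness. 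Tracking the signs and using $\sigma_{\max}(H_{\text{diag}}-H)=\sigma_{\max}(H-H_{\text{diag}})$ gives the stated bound.

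I do not expect a genuine obstacle: the lemma is a sign-careful combination of Assumption~\ref{assumption: chapter 2} with Facts~(i)--(iii). The one subtlety to flag is that in Item~2 one must keep the \emph{exact} decomposition of Fact~(ii) wherever the favorable $Nm_y$ term is extracted, rather than the cruder norm bound of Item~3; this is precisely what makes the coupling enter as $\sigma_{\max}(H-H_{\text{diag}})$, so that the lemma feeds cleanly into the two-time-scale Lyapunov estimate proving the contraction~\eqref{LTI result} in Theorem~\ref{theorem: plant dynamic}.
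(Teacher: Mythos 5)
Your proposal is correct and follows essentially the same route as the paper's proof: the explicit error recursion $x_{k+1}-H_xu_{k+1}=A\varphi_2+\eta H_x\varphi_1$ with Cauchy--Schwarz for Item~1, the decomposition $y_k-\inff{y}=C\varphi_2+H(u_k-\inff{u})$ for Item~3, the fixed-point identity plus Lipschitz bounds for Item~4, and for Item~2 the same add-and-subtract of $H(u_k-\inff{u})$ inside the $H_{\text{diag}}$ term to extract $Nm_y\norm{y_k-\inff{y}}^2$ while leaving the $(H_{\text{diag}}-H)$ and $-C\varphi_2$ remainders. Note that your derivation (like the paper's) produces $Nm_y\norm{y_k-\inff{y}}^2$, confirming that the missing square in the stated Item~2 is a typo in the lemma statement rather than an issue with your argument.
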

\begin{proof}

1): In $\norm{x_{k+1}-H_xu_{k+1}}^2$, we replace $x_{k+1}$ and $u_{k+1}$ with the expressions in \eqref{sys: LTI}. Then, the bound can be obtained by using the Cauchy-Schwarz inequality.

2):
we can bound $\varphi_1^{\top}(u_k-\inff{u})$ by
\begin{align*}
    \varphi_1^{\top}(u_k&-\inff{u})\\
    =&(\nabla \Phi^{(1)}(u_k)+H_{\text{diag}}^{\top}\nabla\Phi^{(2)}(y_k)\\
    &-\nabla \Phi^{(1)}(\inff{u})-H_{\text{diag}}^{\top}\nabla\Phi^{(2)}(\inff{y}))^{\top}(u_k-\inff{u})\\
    \geq&Nm_u\norm{u_k-\inff{u}}^2\\
    &+ \big(\nabla\Phi^{(2)}(y_k)-\nabla\Phi^{(2)}(\inff{y})\big)^{\top}(H_{\text{diag}}(u_k\!-\!\inff{u})\\
    &+y_k-y_k+Hu_k-Hu_k+H\inff{u}-H\inff{u})\\
    \geq&Nm_u\norm{u_k-\inff{u}}^2 + Nm_y\norm{y_k-\inff{y}}^2\\
    &\!+\! \big(\nabla\Phi^{(2)}(y_k)\!-\!\nabla\Phi^{(2)}(\inff{y})\big)^{\top}((H_{\text{diag}}\!-\!H)(u_k\!-\!\inff{u})\\
    &+Hu_k-y_k)\\
    \geq&Nm_u\norm{u_k-\inff{u}}^2 + Nm_y\norm{y_k-\inff{y}}^2\\
    &- NL_y\sigma_{\max}(H_{\text{diag}}-H)\norm{y_k-\inff{y}}\norm{u_k-\inff{u}}\\
    &-NL_y\sigma_{\max}(C)\norm{y_k-\inff{y}}\norm{\varphi_2}.
\end{align*}

3):
\begin{align*}
    \norm{y_k-\inff{y}} &=\norm{y_k-Hu_k+Hu_k-\inff{y}}\\
    & \leq \sigma_{\max}(C)\norm{\varphi_2} + \sigma_{\max}(H)\norm{u_k-\inff{u}}.
\end{align*}

4):
We have
\begin{align*}
    \norm{\varphi_1&}\\
    =&\norm{\nabla \Phi^{(1)}(u_k)+H_{\text{diag}}^{\top}\nabla \Phi^{(2)}(y_k)\\
    &-\nabla \Phi^{(1)}(\inff{u})-H_{\text{diag}}^{\top}\nabla \Phi^{(2)}(\inff{y})}\\
    \leq&NL_u\norm{u_k-\inff{u}}+NL_y\sigma_{\max}(H_{\text{diag}})\norm{y_k-\inff{y}}\\
    \leq&NL_u\norm{u_k-\inff{u}}+NL_y\sigma_{\max}(H_{\text{diag}})(\sigma_{\max}(C)\norm{\varphi_2}\\
    &+ \sigma_{\max}(H)\norm{u_k-\inff{u}})\\
    \leq&(NL_u+NL_y\sigma_{\max}(H_{\text{diag}})\sigma_{\max}(H))\norm{u_k-\inff{u}}\\
    &+NL_y\sigma_{\max}(H_{\text{diag}})\sigma_{\max}(C)\norm{\varphi_2}.
\end{align*}
\end{proof}

Now we are ready to prove Theorem \ref{theorem: plant dynamic}. Consider
\begin{equation*}
\label{LTI proof step: 1}
\begin{split}
    \left\Vert\begin{matrix}x_{k+1}-H_xu_{k+1} \\u_{k+1}-\inff{u}\\\end{matrix}\right\Vert^2&=\|x_{k+1}\!-\!H_xu_{k+1}\|^2 + \|u_{k+1}\!-\!\inff{u}\|^2\\
    &=\|x_{k+1}-H_xu_{k+1}\|^2 + \|u_k-\inff{u}\|^2\\
    &\quad- 2\eta\varphi_1^{\top}(u_k-\inff{u}) + \eta^2\norm{\varphi_1}^2.
\end{split}
\end{equation*}
By using the bounds derived in Lemma \ref{appendix lemma}, we obtain
\begin{equation}
\label{appendix eq}
\left\Vert
\begin{matrix}
   x_{k+1}-H_xu_{k+1} \\
   u_{k+1}-\inff{u}\\
\end{matrix}
\right\Vert^2\leq\lambda_{\max}(\Xi)\left\Vert
\begin{matrix}
   x_k-H_xu_k \\
   u_k-\inff{u}\\
\end{matrix}
\right\Vert^2,
\end{equation}
where $\Xi = \begin{bmatrix}
        \lambda_{\max}(A^{\top}A) + a_3\eta^2 + a_4\eta& a_1\eta^2+a_2\eta\\
        a_1\eta^2+a_2\eta&1-m'\eta + L'\eta^2
        \end{bmatrix}$ and the parameters therein are defined as
\begin{align*}
    m' =&2(Nm_u+Nm_y\sigma_{\min}^2(H)\\
    &-NL_y\sigma_{\max}(H_{\text{diag}}-H)\sigma_{\max}(H)),\\
    L' =& \lambda_{\max}(H_x^{\top}H_x\!+\!I)(NL_u\!+\!NL_y\sigma_{\max}(H_{\text{diag}})\sigma_{\max}(H))^2,\\
    a_1 =& \lambda_{\max}(H_x^{\top}H_x+I)NL_y\sigma_{\max}(H_{\text{diag}})\sigma_{\max}(C)\\
    &\cdot(NL_u+NL_y\sigma_{\max}(H_{\text{diag}})\sigma_{\max}(H)),\\
    a_2 =& \lambda_{\max}(H_x^{\top}A)(NL_u+NL_y\sigma_{\max}(H_{\text{diag}})\sigma_{\max}(H))\\
    &+2Nm_y\sigma_{\max}(C)\sigma_{\max}(H)\\
    &+NL_y\sigma_{\max}(C)(\sigma_{\max}(H_{\text{diag}}-H)+\sigma_{\max}(H)),\\
    a_3 =& \lambda_{\max}(H_x^{\top}H_x+I)N^2L_y^2\sigma_{\max}^2(H_{\text{diag}})\sigma_{\max}^2(C),\\
    a_4 =& 2(\lambda_{\max}(H_x^{\top}A)\sigma_{\max}(H_{\text{diag}})\sigma_{\max}(C)\\
    &-(Nm_y\sigma_{\min}^2(C)-NL_y\sigma_{\max}^2(C))).
\end{align*}

Note that if $m'>0$ and $\eta<\frac{m'}{L'}$, by Schur's complement, the sufficient and necessary condition for $\lambda_{\max}(\Xi) <1$ is
\begin{equation*}
    (a_3m'+2a_1a_2-a_4L')\eta^2+(a_4m'+a_2^2+tL')\eta-tm'<0,
\end{equation*}
where $t = 1-\lambda_{\max}(A^{\top}A)$.
The previous inequality yields the following stability bounds
\begin{equation*}
\label{eta upper bound}
\eta < 
\begin{cases}
  \eta_1^* & \text{if $a_3m'+2a_1a_2-a_4L'>0$},\\
  \eta_2^* & \text{if $a_3m'+2a_1a_2-a_4L'\leq0$},\\
\end{cases}       
\end{equation*}
where $\eta_1^*$ and $\eta_2^*$ are defined as
\tiny
\begin{equation*}
\begin{split}
    \eta_1^*=&\frac{\sqrt{(a_4m'\!+\!a_2^2\!+\!tL')^2\!+\!4tm'(a_3m'\!+\!2a_1a_2\!-\!a_4L')}\!-\!(a_4m'\!+\!a_2^2\!+\!tL')}{2(a_3m'\!+\!2a_1a_2\!-\!a_4L')},\\
    \eta_2^* =& \frac{tm'}{a_4m'+a_2^2+tL'}.
\end{split}
\end{equation*}
\normalsize
The proof is finished by recursively applying \eqref{appendix eq}.

\end{document}